\theoremstyle{plain}
        \newtheorem{theorem}[equation]{Theorem}
        \newtheorem{lemma}[equation]{Lemma}
\newcommand{\mr}[1]{\buildrel {#1} \over \longrightarrow}
\newcommand{\cqd}{\hfill$\Box$}
\begin{document}

{ \hfill A SIMPLE PROOF OF MCNAUGHTON THEOREM  \hfill}

\vspace{1ex}

{ \hfill \it{Eduardo J. Dubuc and Yuri Poveda}  \hfill}
 
\vspace{5ex}
   
In this note we exhibit a very simple proof of McNaughton Theorem,
almost right out of the definitions, and
at the same time we observe  that this theorem does not depend of
Chang's completeness theorem.

We recall first a proof of the Chinese theorem for MV-algebras:
%
%
\begin{lemma} Given any MV-algebra $A$ and $a,b,c \in A$, 
$$ 
if \;\; a \leq b \oplus c, \;\;\;  then \;\;\; a \ominus b \leq c 
$$
\end{lemma}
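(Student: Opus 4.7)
The plan is to translate both the hypothesis and the conclusion into an equation in the MV-algebra using the standard characterization of the partial order, and then to see that, after expanding the definitions of $\ominus$ and $\neg$, the two equations are literally identical.

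Recall that in any MV-algebra one has $x \leq y$ iff $\neg x \oplus y = 1$, that $\oplus$ is associative and commutative, that $\neg$ is an involution, and that $a \ominus b$ is defined as $a \odot \neg b = \neg(\neg a \oplus b)$. Using these facts, I would first rewrite the hypothesis $a \leq b \oplus c$ as
$$
\neg a \oplus (b \oplus c) \;=\; 1,
$$
and then, by associativity of $\oplus$, as $(\neg a \oplus b) \oplus c = 1$.

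Next I would rewrite the conclusion $a \ominus b \leq c$ as $\neg(a \ominus b) \oplus c = 1$. Expanding via the definition of $\ominus$ and the involutivity of $\neg$, the left factor becomes
$$
\neg(a \ominus b) \;=\; \neg\bigl(\neg(\neg a \oplus b)\bigr) \;=\; \neg a \oplus b,
$$
so the conclusion reads $(\neg a \oplus b) \oplus c = 1$, which is exactly the equation obtained from the hypothesis. Hence the implication is immediate.

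There is essentially no obstacle here beyond fluency with the MV-algebra definitions; the whole argument is a matter of unfolding notation and applying associativity and the involution law, so no nontrivial MV-algebraic identity is actually needed.
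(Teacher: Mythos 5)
Your proof is correct, but it takes a genuinely different route from the paper's. The paper argues by monotonicity: from $a \leq b \oplus c$ it deduces $a \ominus b \leq (b \oplus c) \ominus b$, then invokes the identity $(b \oplus c) \ominus b = \neg b \wedge c$ and the fact that a meet is below each of its arguments. You instead unfold everything through the order characterization $x \leq y \iff \neg x \oplus y = 1$: the hypothesis becomes $(\neg a \oplus b) \oplus c = 1$ by associativity, and since $\neg(a \ominus b) = \neg\neg(\neg a \oplus b) = \neg a \oplus b$, the conclusion becomes the very same equation. Your computation is sound (with the standard definition $a \ominus b = a \odot \neg b = \neg(\neg a \oplus b)$ and the standard definition of the natural order), and it has two advantages: it uses nothing beyond associativity, involution, and the definition of $\leq$ --- no monotonicity of $\ominus$ and no lattice identity --- and it actually establishes the \emph{equivalence} $a \leq b \oplus c \iff a \ominus b \leq c$, of which the lemma is one direction. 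The paper's version is a slicker one-line chain but leans on the identity $(b \oplus c) \ominus b = \neg b \wedge c$, which itself requires a small argument.
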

\begin{proof} $a \ominus b \leq (b \oplus c)
\ominus b = (\neg b) \wedge c \leq c$
\end{proof}
\begin{lemma} Given any MV-algebra $A$ and a
  pair of ideals $I_1, I_2$ of $A$, we have:  

If $a_1, a_2 \in
A$ are such that $a_1 \equiv a_2 \left(I_1,\, I_2 \right)$, then there is $a \in A$ such that
$a \equiv a_1 \left(I_1 \right)$, and $a \equiv a_2 \left(I_2 \right)$.
\end{lemma}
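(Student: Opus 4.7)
The plan is to exhibit an explicit witness $a$. Write $d_1 := a_1 \ominus a_2$ and $d_2 := a_2 \ominus a_1$. The hypothesis unpacks to $d_i \leq u_i' \oplus v_i'$ for some $u_i' \in I_1$ and $v_i' \in I_2$. As a preparatory step I would refine the decomposition by setting $u_i := u_i' \in I_1$ and $v_i := d_i \ominus u_i'$; the previous lemma, applied to $d_i \leq u_i' \oplus v_i'$, gives $v_i \leq v_i'$, so $v_i \in I_2$, and the new pair satisfies $v_i \leq d_i$ together with $u_i \oplus v_i = u_i \vee d_i \geq d_i$. The candidate is
\[
a \;:=\; (a_2 \oplus v_1) \ominus v_2.
\]

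The congruence $a \equiv a_2\,(I_2)$ drops out at once: from $a \leq a_2 \oplus v_1$ the previous lemma yields $a \ominus a_2 \leq v_1 \in I_2$, and from $a \oplus v_2 = (a_2 \oplus v_1) \vee v_2 \geq a_2$ it yields $a_2 \ominus a \leq v_2 \in I_2$.

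The nontrivial half $a \equiv a_1\,(I_1)$ is the core of the argument; its engine is the MV-identity $a_2 \oplus d_1 = a_1 \vee a_2 = a_1 \oplus d_2$. For $a \ominus a_1 \leq u_2$: using $v_1 \leq d_1$ one has $a_2 \oplus v_1 \leq a_2 \oplus d_1 = a_1 \oplus d_2 \leq a_1 \oplus u_2 \oplus v_2$, and two applications of the previous lemma collapse this to $a \ominus a_1 \leq u_2 \in I_1$. The harder bound $a_1 \ominus a \leq u_1$ is the main obstacle, as the naive chain leaks an $I_2$-term into what must be a pure $I_1$-estimate. The cure is the auxiliary MV-inequality $(x \ominus y) \oplus z \geq (x \oplus z) \ominus y$, itself an immediate corollary of the previous lemma applied to $((x \ominus y) \oplus z) \oplus y = (x \vee y) \oplus z \geq x \oplus z$. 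Using it with $x = a_2 \oplus v_1$, $y = v_2$, $z = u_1$, and combining with $a_2 \oplus v_1 \oplus u_1 \geq a_2 \oplus d_1 = a_1 \vee a_2$ together with the estimate $(a_1 \vee a_2) \ominus v_2 \geq a_1$ (valid since $v_2 \leq d_2 \leq \neg a_1$, via the MV-identity $(a_1 \oplus v_2) \ominus v_2 = a_1 \wedge \neg v_2 = a_1$), one obtains $a \oplus u_1 \geq a_1$, equivalently $a_1 \ominus a \leq u_1 \in I_1$.
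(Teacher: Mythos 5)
Your proof is correct; every step checks out. The refinement $v_i = d_i \ominus u_i'$ does land in $I_2$ by the previous lemma, the identity $y \oplus (x \ominus y) = x \vee y$ gives $u_i \oplus v_i \geq d_i$, and the four required bounds $a \ominus a_2 \leq v_1$, $a_2 \ominus a \leq v_2$, $a \ominus a_1 \leq u_2$, $a_1 \ominus a \leq u_1$ all follow as you argue (for the last one, your auxiliary inequality and the observation $v_2 \leq d_2 \leq \neg a_1$ are exactly what is needed to keep the estimate inside $I_1$). The route is genuinely different from the paper's, although both start from the same decomposition of $a_1 \ominus a_2$ and $a_2 \ominus a_1$ supplied by the previous lemma. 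The paper takes the symmetric witness $a = (a_1 \ominus a_2 \ominus c_1) \oplus (a_2 \ominus a_1 \ominus d_2) \oplus (a_1 \wedge a_2)$, in which one summand lies in $I_2$, one in $I_1$, and the verification is a two-line computation in each quotient $A/I_j$ using the identity $(x \ominus y) \oplus (x \wedge y) = x$ of \cite[1.6.2.]{COM}. Your witness $(a_2 \oplus v_1) \ominus v_2$ is instead an $I_2$-perturbation of $a_2$, which makes the congruence modulo $I_2$ immediate but shifts all the work onto the congruence modulo $I_1$, handled by a chain of order estimates rather than a quotient computation. What your version buys is explicit ideal elements bounding each of the differences $a \ominus a_i$ and $a_i \ominus a$, with no passage to quotients; what it costs is the asymmetry between the two congruences, the extra inequality $(x \ominus y) \oplus z \geq (x \oplus z) \ominus y$, and the identity $(x \oplus y) \ominus y = x \wedge \neg y$, none of which is needed for the paper's symmetric witness.
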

\begin{proof} We know that $a_1 \ominus a_2
\,\text{and}\, \hspace{0.1cm} a_2 \ominus a_1 \in (I_1,\, I_2)$.
Thus there are \mbox{$c_1, d_1 \in I_1$} and $c_2, d_2 \in I_2$ such that
$a_1 \ominus a_2 \leq c_1 \oplus c_2$ and $ a_2 \ominus a_1 \leq d_1
\oplus d_2$. From the lemma above we have that $(a_1 \ominus a_2) \ominus
c_1  \leq  c_2, \,\, \text{and} \,\, (a_2 \ominus a_1)
\ominus d_2 \leq d_1$. Then $(a_1 \ominus a_2) \ominus c_1
\in I_2$ and $(a_2 \ominus a_1)
\ominus d_2 \in I_1$\\
Set $$a = (a_1 \ominus a_2 \ominus c_1) \oplus (a_2 \ominus a_1
\ominus d_2) \oplus (a_1 \wedge a_2)$$ 
Then:
$$
[a]_{I_1} = ([a_1]_{I_1} \ominus [a_2]_{I_1}) \oplus ([a_1]_{I_1} \wedge
  [a_2]_{I_1}) = [a_1]_{I_1}\,,
$$

\vspace{1ex}

the second equation holds by \cite[1.6.2.]{COM}. Similarly for $[a]_{I_2} = [a_2]_{I_2}$.
\end{proof}

Taking into account the equation $\bigcap_{i=1}^{n-1}(I_i,\,
I_n) = (\,\bigcap_{i=1}^{n-1}I_i,\, I_n)$, this lemma
easily generalize by induction to a finite number of ideals.  
\begin{theorem}[\cite{FL} 2.6.] \label{chino} 
Given any MV-algebra $A$ and a finite number of ideals  $I_1, \cdots, I_n$
of $A$, we have: 

If $a_1, \cdots, a_n \in A$ are such that $a_i \equiv a_j
(I_i,\, I_j)$ for $i,j \in \{1, \cdots, n \}$,  then there exists
$a \in A$ such that $a \equiv a_i (I_i)$ for $i \in \{1, \cdots,
n\}$. \cqd
\end{theorem}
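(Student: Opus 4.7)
The plan is to argue by induction on $n$, using the case $n=2$ (the previous lemma) as the base case and the ideal-theoretic identity $\bigcap_{i=1}^{n-1}(I_i, I_n) = (\bigcap_{i=1}^{n-1} I_i,\, I_n)$ recorded just before the statement as the engine of the inductive step.

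For the inductive step, suppose the result is known for $n-1$ ideals and assume $a_1, \ldots, a_n \in A$ satisfy $a_i \equiv a_j \pmod{(I_i, I_j)}$ for all $i,j$. First I would apply the inductive hypothesis to the sub-family $I_1, \ldots, I_{n-1}$ and to the elements $a_1, \ldots, a_{n-1}$ (whose pairwise compatibility is part of the hypothesis) to obtain some $b \in A$ with $b \equiv a_i \pmod{I_i}$ for every $i<n$. Writing $J = \bigcap_{i=1}^{n-1} I_i$, the plan is then to promote $b$ to an element compatible with $a_n$ modulo $(J, I_n)$ and reduce to the two-ideal lemma already proved.

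To carry this out, for each $i<n$ the inclusion $I_i \subseteq (I_i,I_n)$ together with $b \equiv a_i \pmod{I_i}$ and the hypothesis $a_i \equiv a_n \pmod{(I_i,I_n)}$ yields $b \equiv a_n \pmod{(I_i,I_n)}$. Hence both $b \ominus a_n$ and $a_n \ominus b$ lie in $\bigcap_{i=1}^{n-1}(I_i, I_n)$, which by the cited identity equals $(J, I_n)$. Now the previous lemma applied to the pair $J, I_n$ and the elements $b, a_n$ produces $a \in A$ with $a \equiv b \pmod{J}$ and $a \equiv a_n \pmod{I_n}$. Since $J \subseteq I_i$ for every $i<n$, the first congruence descends to $a \equiv b \equiv a_i \pmod{I_i}$, so $a$ simultaneously represents $a_1,\ldots,a_n$ modulo $I_1,\ldots,I_n$, as required.

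The only genuine input beyond the two-ideal lemma is the distributive identity $\bigcap_{i=1}^{n-1}(I_i, I_n) = (J, I_n)$; the excerpt takes it for granted, and I would expect it to be verified by a short computation with $\oplus$ and $\leq$ in the spirit of the first lemma. Once it is granted, the rest of the argument is essentially bookkeeping, and the only place where the MV-algebraic content really enters is in the invocation of the previous lemma at the very end.
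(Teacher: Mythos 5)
Your proof is correct and is exactly the induction the paper has in mind: the paper leaves the theorem with only the remark that it "easily generalizes by induction" from the two-ideal lemma using the identity $\bigcap_{i=1}^{n-1}(I_i,\, I_n) = (\bigcap_{i=1}^{n-1}I_i,\, I_n)$, and your argument fills in precisely that induction. No discrepancies to report.
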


Let $F_n$ be the free MV-algebra on n generators, that is, the algebra
of terms in $n$ variables. Recall that for any linear
polynomial in $n$ variables with integer coefficients $g$
there is a
  term $a \in F_n$ such that $f_a = g^\sharp$ as
  functions $[0,\,1]^n \mr{} [0,\,1]$, where $g^\sharp = (g \wedge 0)
  \vee 1$, and $f_a$ is the corresponding term function
  (\cite[3.1.9]{COM}).

Recall that a
function $[0,\,1]^n \mr{f} [0,\,1]$ is called a McNaughton function
iff $f$ is continuous with respect to the natural topology of 
$[0,\, 1]^n$, and there are linear polynomials $g_1,\, \ldots \, g_k$ with integer coefficients
such that for each point $x \in [0,\, 1]^n$ there is an index
 $j \in \{1, \, \ldots \, k \}$ with $f(x) = g_j(x)$.

Let $A_n$ be the algebra of term functions. We do not assume Chang's
completeness theorem, so we can use only a surjective morphism $F_n \to
A_n$, not an isomorphism. We will apply theorem \ref{chino} to the
algebra $A_n$.
 
\begin{theorem}[McNaughton]
Given any McNaughton function $f$, there is a term $a \in F_n$
such that $f = f_a$. 
\end{theorem}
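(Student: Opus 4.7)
The plan is to apply Theorem \ref{chino} in the term-function algebra $A_n$, with one ideal per linear piece of $f$. Let $g_1, \dots, g_k$ be the linear polynomials witnessing that $f$ is a McNaughton function. By \cite[3.1.9]{COM}, for each $j$ there is a term $a_j \in F_n$ with $f_{a_j} = g_j^\sharp$; write $\bar a_j = f_{a_j} \in A_n$. Setting
$$
C_j \,=\, \{x \in [0,1]^n : f(x) = g_j(x)\},
$$
the constraint $f(x) \in [0,1]$ forces $g_j^\sharp = g_j = f$ on $C_j$; the $C_j$ are closed and cover $[0,1]^n$ by the very definition of a McNaughton function.

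Define the ideals
$$
I_j \,=\, \{\alpha \in A_n : \alpha(x) = 0 \text{ for all } x \in C_j\}
$$
of $A_n$; that each $I_j$ really is an ideal is immediate since $\oplus$ and $\leq$ in $A_n$ are pointwise. If I succeed in producing $\alpha \in A_n$ with $\alpha \equiv \bar a_j \pmod{I_j}$ for every $j$, then $\alpha(x) = \bar a_j(x) = g_j^\sharp(x) = f(x)$ on each $C_j$; since the $C_j$ cover $[0,1]^n$, the functions $\alpha$ and $f$ agree everywhere, and any lift of $\alpha$ through the surjection $F_n \to A_n$ is the required term.

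To invoke Theorem \ref{chino} I must verify the compatibility condition $\bar a_i \equiv \bar a_j \pmod{(I_i, I_j)}$ for every pair $i, j$. Both $\bar a_i$ and $\bar a_j$ equal $f$ on $C_i \cap C_j$, so the MV-differences $\bar a_i \ominus \bar a_j$ and $\bar a_j \ominus \bar a_i$ vanish on $C_i \cap C_j$. What remains is to exhibit explicit witnesses $h_i \in I_i$ and $h_j \in I_j$ with $\bar a_i \ominus \bar a_j \leq h_i \oplus h_j$, and symmetrically for the reverse difference. My first attempt would be to truncate $\bar a_i \ominus \bar a_j$ against suitable McNaughton functions that vanish on $C_i$ or on $C_j$, splitting the difference into a part supported away from $C_i$ and one supported away from $C_j$.

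The main obstacle is precisely this compatibility step: showing that every element of $A_n$ that vanishes on $C_i \cap C_j$ lies in the ideal $(I_i, I_j)$. This is a separation property of the closed polyhedral sets $C_j$ and carries essentially all of the geometric content of McNaughton's theorem; once it is in hand, the rest is a formal consequence of Theorem \ref{chino} together with the covering property $\bigcup_j C_j = [0,1]^n$.
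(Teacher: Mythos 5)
You have correctly identified where the difficulty lies, but the step you leave open --- that every element of $A_n$ vanishing on $C_i\cap C_j$ belongs to $(I_i,I_j)$ --- is not a routine verification to be filled in later: it is the entire theorem in disguise, and your choice of covering sets makes it essentially inaccessible. The sets $C_j=\{x: f(x)=g_j(x)\}$ are defined in terms of the unknown function $f$ itself, so the ideals $I_j$ are full vanishing ideals of closed sets that are not presented as zero sets of finitely many \emph{term functions}. The only separation tool available without Chang completeness is \cite[3.4.8]{COM}, which says that a term function vanishing on the common zero set of finitely many term functions lies in the ideal they generate; it applies to explicitly finitely generated ideals, not to abstract vanishing ideals $I(C)$. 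To use it in your setting you would first need a term function whose zero set is exactly $C_j$, i.e.\ something like a term realizing $d(f,g_j^\sharp)$ --- which presupposes that $f$ is a term function, the very thing being proved. Your proposed "truncation" witnesses run into the same circularity.

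The paper's proof is arranged precisely to dodge this. Instead of covering $[0,1]^n$ by the pieces $C_j$ (one per linear constituent), it covers by the order-type cells $Z_\sigma=\{x: g_{\sigma(1)}^\sharp(x)\le\cdots\le g_{\sigma(k)}^\sharp(x)\}$, one per permutation $\sigma$. These are defined purely from the $g_i^\sharp$, hence each $Z_\sigma$ is manifestly the zero set of the explicit finitely generated ideal $T_\sigma=(h_{\sigma(1)}\ominus h_{\sigma(2)},\dots,h_{\sigma(k-1)}\ominus h_{\sigma(k)})$ of term functions, and likewise $Z_\sigma\cap Z_\mu=Z(T_\sigma,T_\mu)$; now \cite[3.4.8]{COM} applies verbatim to give the compatibility $d(h_{u_\sigma},h_{u_\mu})\in(T_\sigma,T_\mu)$, and Theorem \ref{chino} finishes the argument. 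The price of this choice is that one must know $f$ coincides with a \emph{single} $g_{u_\sigma}^\sharp$ on all of $Z_\sigma$ --- this is the convexity/continuity argument of \cite[page 67]{COM}, a step your decomposition does not need but which is far cheaper than the separation property you require. So your skeleton (Chinese remainder in $A_n$ plus a covering) matches the paper's, but as it stands the proof has a genuine gap at its central step, and closing it within your framework would amount to reproving McNaughton's theorem rather than deducing it.
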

\begin{proof}
Let $g_1, \, \ldots \, g_k$ be linear constituents for $f$. Each point 
\mbox{$p \in [0,\, 1]^n$} determines (not univocally) a permutation $\sigma$ of the set
$\{1,\, \ldots \, k\}$ by ordering 
\mbox{$g_{\sigma(1)}^\sharp(p) \leq g_{\sigma(2)}^\sharp(p) \leq \,
    \ldots \, g_{\sigma(k)}^\sharp(p)$.}  Let
$u_\sigma \in \{1,\, \ldots \, k\}$ be an index such $f(p) = g_{u_\sigma}(p) =
g_{u_\sigma}^\sharp(p)$.
Then as in \cite[page 67]{COM}, $f(x) = g_{u_\sigma}(x) = g_{u_\sigma}^\sharp(x)$ for
all $x$ in the set $Z_\sigma = \{x \,|\, g_{\sigma(1)}^\sharp(x) \leq g_{\sigma(2)}^\sharp(x) \leq \,
    \ldots \, g_{\sigma(k)}^\sharp(x)\}$.
Let  $h_i \in A_n$ be term functions such that $h_i = g_i^\sharp$. Clearly 
$Z_\sigma$ is the Zero set of the ideal of term functions  $T_\sigma =
(h_{\sigma(1)} \ominus h_{\sigma(2)}\,,
\,\ldots\,, h_{\sigma(k-1)}\ominus  h_{\sigma(k)})$, $Z_\sigma
= Z(T_\sigma)$.  
 Consider the collection of ideals $T_\sigma$ in
$A_n$, and term functions $h_{u_\sigma} \in A_n$. Then given $\sigma, \; \mu$
 as above, $h_{u_\sigma}(x) = h_{u_\mu}(x) = f(x)$ for all $x$
 in $Z_\sigma \cap Z_\mu = Z(T_\sigma , T_\mu )$. It follows from
 \cite[3.4.8]{COM} \footnote{Although \cite[3.4.8]{COM} in \cite{COM} is stated for the
   free algebra $F_n$, it is actually a result on the algebra of term
   functions  $A_n$.}  that $d(h_{u_\sigma},\,
 h_{u_\mu}) \in (T_\sigma , T_\mu )$, where $d$ is the distance
 operation. By Theorem \ref{chino} above, there is a term function
   $h \in A_n$ such that $h \equiv h_{u_\sigma} (T_\sigma)$. Then for any $x \in [0,\,
     1]^n$, $h(x) = h_{u_\sigma}(x) =  g_{u_\sigma}^\sharp(x) =
 f(x)$. Take any $a \in F_n$ with $h = f_a$. 
\end{proof}

\end{document}